\def\todaysdate{31\textsuperscript{st} January 2022}
\definecolor{lightblue}{rgb}{0.8,0.8,1}
\definecolor{vdarkred}{rgb}{0.7,0,0}
\declaretheoremstyle[
  spaceabove=\topsep,
  spacebelow=\topsep,
  headpunct=,
  numbered=no,
  postheadspace=1ex,
  headfont=\color{vdarkred}\normalfont\bfseries,
  bodyfont=\normalfont\itshape,
]{colored}
\declaretheoremstyle[
  spaceabove=\topsep,
  spacebelow=\topsep,
  headpunct=,
  numbered=no,
  postheadspace=1ex,
  headfont=\normalfont\bfseries,
  bodyfont=\normalfont\itshape,
]{italic}
\declaretheoremstyle[
  spaceabove=\topsep,
  spacebelow=\topsep,
  headpunct=,
  numbered=no,
  postheadspace=1ex,
  headfont=\normalfont\bfseries,
  bodyfont=\normalfont\upshape,
]{upright}
\declaretheorem[style=italic,name=Theorem,numbered=yes]{thm}
\declaretheorem[style=italic,name=Theorem,numbered=yes]{athm}
\declaretheorem[style=italic,name=Lemma,numbered=yes,numberlike=thm]{lem}
\declaretheorem[style=italic,name=Proposition,numbered=yes,numberlike=thm]{prop}
\declaretheorem[style=upright,name=Definition,numbered=yes,numberlike=thm]{defn}
\declaretheorem[style=upright,name=Remark,numbered=yes,numberlike=thm]{rmk}
\declaretheorem[style=upright,name=Notation,numbered=yes,numberlike=thm]{notation}
\renewcommand*{\@seccntformat}[1]{\upshape\csname the#1\endcsname.\hspace{1ex}}
\renewcommand*{\section}{\@startsection{section}{1}{\z@}%
	{2.5ex \@plus 1ex \@minus 0.2ex}%
	{1.5ex \@plus 0.2ex}%
	{\normalfont\Large\bfseries}}
\renewcommand*{\subsection}{\@startsection{subsection}{2}{\z@}%
	{2.5ex \@plus 1ex \@minus 0.2ex}%
	{1.5ex \@plus 0.2ex}%
	{\normalfont\large\bfseries}}
\renewcommand*{\subsubsection}{\@startsection{subsubsection}{3}{\z@}%
	{2.5ex \@plus 1ex \@minus 0.2ex}%
	{1.5ex \@plus 0.2ex}%
	{\normalfont\normalsize\bfseries}}
\newcommand*{\subsubsubsection}{\@startsection{paragraph}{4}{\z@}%
	{2.5ex \@plus 1ex \@minus 0.2ex}%
	{1.5ex \@plus 0.2ex}%
	{\normalfont\normalsize\bfseries}}
\newcommand{\incl}[3][right]%
{%
\draw[<-,>=#1 hook] #2 to ($ #2!0.5!#3 $);
\draw[->] ($ #2!0.5!#3 $) to #3;%
}
\newcommand{\inclusion}[5][right]%
{%
\draw[<-,>=#1 hook] #4 to ($ #4!0.5!#5 $) node[#2,font=\small]{#3};
\draw[->] ($ #4!0.5!#5 $) to #5;%
}
\newcommand{\cE}{\mathcal{E}}
\newcommand{\cR}{\mathcal{R}}
\newcommand{\cU}{\mathcal{U}}
\newcommand{\bD}{\mathbb{D}}
\newcommand{\bQ}{\mathbb{Q}}
\newcommand{\bR}{\mathbb{R}}
\newcommand{\bS}{\mathbb{S}}
\newcommand{\bZ}{\mathbb{Z}}
\newcommand{\LB}{\mathbf{LB}}
\newcommand{\B}{\mathbf{B}}
\renewcommand{\geq}{\geqslant}
\renewcommand{\leq}{\leqslant}
\renewcommand{\footnoterule}{%
  \kern -3pt
  \hrule width \textwidth height 0.4pt
  \kern 2.6pt
}
\definecolor{dred}{rgb}{0.7,0,0}
\definecolor{dgreen}{rgb}{0,0.5,0}
\begin{document}
\title{\Large\bfseries The Burau representations of loop braid groups}
\author{\normalsize Martin Palmer and Arthur Souli{\'e}}
\date{\normalsize\todaysdate}
\maketitle
{
\makeatletter
\renewcommand*{\BHFN@OldMakefntext}{}
\makeatother
\footnotetext{2020 \textit{Mathematics Subject Classification}: 20C12, 20F36, 20J05, 57M07, 57M10.}
\footnotetext{\textit{Key words and phrases}: Homological representations, loop braid groups, covering spaces.}
\footnotetext{The first author was partially supported by a grant of the Romanian Ministry of Education and Research, CNCS - UEFISCDI, project number \href{https://mdp.ac/pce2020}{PN-III-P4-ID-PCE-2020-2798}, within PNCDI III. The second author was partially supported by the ANR Projects ChroK ANR-16-CE40-0003 and AlMaRe ANR-19-CE40-0001-01.}
}


\begin{abstract}
We give a simple topological construction of the Burau representations of the loop braid groups. There are four versions: defined either on the non-extended or extended loop braid groups, and in each case there is an unreduced and a reduced version. Three are not surprising, and one could easily guess the correct matrices to assign to generators. The fourth is more subtle, and does not seem combinatorially obvious, although it is topologically very natural.
\end{abstract}

\section*{Introduction}

Loop braid groups appear in many guises in topology and group theory. They may be seen geometrically as fundamental groups of trivial links in $\bR^3$, diagrammatically as equivalence classes of \emph{welded braids} (closely related to virtual braids and virtual knot theory), algebraically as subgroups of automorphism groups of free groups or combinatorially via explicit group presentations.

Loop braid groups have been studied, from the topological viewpoint of motions of trivial links in $\bR^3$, by Dahm \cite{Dahm1962}, Goldsmith \cite{Goldsmith1981}, Brownstein and Lee \cite{BrownsteinLee1993} and Jensen, McCammond and Meier \cite{JensenMcCammondMeier2006}. In parallel, the \emph{symmetric automorphism groups} and the \emph{braid-permutation groups} (subgroups of $\mathrm{Aut}(F_n)$, which may also be interpreted in terms of welded braids) were studied by McCool \cite{McCool1986},  Collins \cite{Collins1989} and Fenn, Rim\'{a}nyi and Rourke \cite{FennRimanyiRourke1997}. In particular, Fenn, Rim{\'a}nyi and Rourke found a finite presentation of the braid-permutation groups. Later, Baez, Wise and Crans \cite[Theorem 2.2]{BaezWiseCrans2007} showed that their presentation is also a presentation of the group of motions of a trivial link, thus bringing together the two different points of view. Loop braid groups, as well as related groups of ``wickets'', have also been studied more recently by Brendle and Hatcher \cite{BrendleHatcher2013Configurationspacesrings}. For a detailed survey of the many different facets of loop braid groups, see Damiani's survey \cite{Damianijourney}.

The definition that we shall use is the following.

\begin{defn}
Let $\bD^3$ denote the closed unit ball in $\bR^3$ and choose a trivial $n$-component link $U_n$ in its interior. Let $\mathrm{Emb}(U_n,\bD^3)$ denote the set of all smooth embeddings of $U_n$ into the interior of $\bD^3$, equipped with the smooth Whitney topology, and write $\mathrm{Emb}^u(U_n,\bD^3)$ for the path-component containing the inclusion (the superscript ${}^u$ stands for ``unknotted and unlinked''). There is a natural action of the diffeomorphism group $\mathrm{Diff}(U_n) \cong \mathrm{Diff}(\bS^1) \wr \mathfrak{S}_n$ on this space, and we define
\[
\cE(U_n,\bD^3) \coloneqq \mathrm{Emb}^u(U_n,\bD^3) / \mathrm{Diff}(U_n).
\]
The $n$-th \emph{extended loop braid group} is the fundamental group $\LB'_n \coloneqq \pi_1(\cE(U_n,\bD^3))$. Similarly, we define
\[
\cE^+(U_n,\bD^3) \coloneqq \mathrm{Emb}^u(U_n,\bD^3) / \mathrm{Diff}^+(U_n),
\]
where $\mathrm{Diff}^+$ denotes orientation-preserving diffeomorphisms, and the $n$-th (non-extended) \emph{loop braid group} is the fundamental group $\LB_n \coloneqq \pi_1(\cE^+(U_n,\bD^3))$.
\end{defn}

Thus elements of $\LB'_n$ are thought of as loops of $n$-component unlinks in $\bR^3$, and elements of $\LB_n$ are thought of as loops of \emph{oriented} $n$-component unlinks in $\bR^3$. Since $\mathrm{Diff}^+(U_n)$ is an index-$2^n$ subgroup of $\mathrm{Diff}(U_n)$, the natural quotient map
\[
\cE^+(U_n,\bD^3) \relbar\joinrel\twoheadrightarrow \cE(U_n,\bD^3)
\]
is a $2^n$-sheeted covering map, and thus induces an injection
\begin{equation}
\label{eq:LB-to-LBprime}
\LB_n \lhook\joinrel\longrightarrow \LB'_n
\end{equation}
of fundamental groups. Thus we view the (non-extended) loop braid group $\LB_n$ as a subgroup (of index $2^n$) of the extended loop braid group $\LB'_n$.

\paragraph{Generators.}

We fix a basepoint for $\cE^+(U_n,\bD^3)$ where the $n$ circles are arranged on the $xy$-plane in a row from left to right, as pictured in Figure \ref{fig:unlink-complement}. With respect to this basepoint, the loop braid group $\LB_n$ is generated by the elements $\tau_1,\ldots,\tau_{n-1}$ and $\sigma_1,\ldots,\sigma_{n-1}$ illustrated in Figure \ref{fig:loop-braid-generators}. The elements $\tau_i$ and $\sigma_i$ involve only the $i$-th and $(i+1)$-st loops, which are exchanged; for $\tau_i$, no loop passes through the other; for $\sigma_i$, the $i$-th loop passes through the $(i+1)$-st loop. The extended loop braid group $\LB'_n$ is generated by these elements together with the elements $\rho_1,\ldots,\rho_n$, also illustrated in Figure \ref{fig:loop-braid-generators}. For finite presentations of $\LB_n$ and $\LB'_n$ using these generators, see Fenn, Rimanyi and Rourke \cite[\S 1]{FennRimanyiRourke1997} and Brendle and Hatcher \cite[Propositions 3.3 and 3.7]{BrendleHatcher2013Configurationspacesrings}. We note that there are many conflicting conventions for the names of these generators in the literature; in particular, our notation is consistent with \cite{FennRimanyiRourke1997} but inconsistent with \cite{BrendleHatcher2013Configurationspacesrings}.

\begin{figure}[ht]
    \centering
    \includegraphics[scale=0.7]{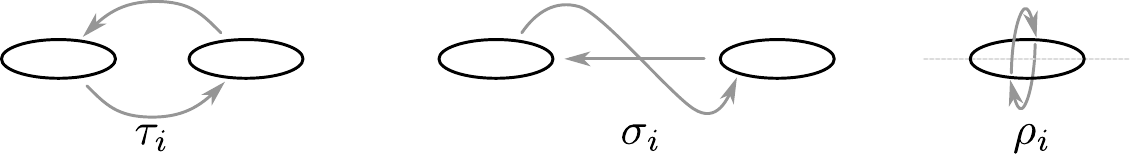}
    \caption{The loop braid group $\LB_n$ is generated by the loops of loops $\tau_1,\ldots,\tau_{n-1}$ and $\sigma_1,\ldots,\sigma_{n-1}$. Together with $\rho_1,\ldots,\rho_n$, these generate the extended loop braid group $\LB'_n$.}
    \label{fig:loop-braid-generators}
\end{figure}

\paragraph{Burau representations of classical braid groups.}

The classical braid groups $\B_n$ are the fundamental groups of the configuration spaces $C_n(\bR^2)$ of points in the plane. One of the oldest interesting representations of $\B_n$ is the \emph{Burau representation}~\cite{burau}
\begin{equation}
\label{eq:Burau-rep}
\B_n \longrightarrow \mathrm{GL}_n(\bZ[t^{\pm 1}]),
\end{equation}
which was defined originally by assigning explicit matrices to the standard generators of $\B_n$, but which is most naturally understood as a homological representation, as follows. The braid group $\B_n$ is naturally isomorphic to the \emph{mapping class group} $\mathrm{MCG}(\bD^2_n) = \pi_0(\mathrm{Diff}_\partial(\bD^2,Q_n))$, the group of isotopy classes of diffeomorphisms of the $2$-disc that act by the identity on its boundary and that preserve a subset $Q_n$ of $n$ points in its interior. In this way, $\B_n$ acts (up to homotopy) on the complement $\bD^2_n = \bD^2 \smallsetminus Q_n$. There is a projection $\pi_1(\bD^2_n) \twoheadrightarrow \bZ$ sending a loop to the sum of its winding numbers around each of the points $Q_n$, and it turns out that the $\B_n$ action on $\bD^2_n$ lifts to the corresponding regular covering space $\pi \colon \widetilde{\bD}^2_n \twoheadrightarrow \bD^2_n$ and commutes with the deck transformations. The induced $\B_n$ action on the first homology $H_1(\widetilde{\bD}^2_n)$ therefore respects its structure as a module over the group-ring of the deck transformation group, $\bZ[\bZ] \cong \bZ[t^{\pm 1}]$, so we obtain a representation
\[
\B_n \longrightarrow \mathrm{Aut}_{\bZ[t^{\pm 1}]} \bigl( H_1 \bigl( \widetilde{\bD}^2_n \bigr) \bigr) .
\]
The homology group $H_1(\widetilde{\bD}^2_n)$ is in fact a free $\bZ[t^{\pm 1}]$-module of rank $n-1$, so choosing a free basis we may rewrite this as
\begin{equation}
\label{eq:reduced-Burau-rep}
\B_n \longrightarrow \mathrm{GL}_{n-1}(\bZ[t^{\pm 1}]).
\end{equation}
This is the \emph{reduced Burau representation}. To obtain the unreduced Burau representation \eqref{eq:Burau-rep}, we consider instead the induced $\B_n$ action on the \emph{relative} first homology $H_1(\widetilde{\bD}^2_n , \pi^{-1}(*))$, where $*$ is a basepoint in the boundary of the disc. This is now a free $\bZ[t^{\pm 1}]$-module of rank $n$, so choosing a free basis we obtain \eqref{eq:Burau-rep}. The canonical map $H_1(\widetilde{\bD}^2_n) \to H_1(\widetilde{\bD}^2_n , \pi^{-1}(*))$ is injective, so the reduced Burau representation \eqref{eq:reduced-Burau-rep} is a subrepresentation of the Burau representation \eqref{eq:Burau-rep}.

Choosing appropriate ordered free generating sets for $H_1(\widetilde{\bD}^2_n)$ and $H_1(\widetilde{\bD}^2_n , \pi^{-1}(*))$ over $\bZ[t^{\pm 1}]$, the representations \eqref{eq:Burau-rep} and \eqref{eq:reduced-Burau-rep} may be written explicitly as
\begin{equation}
\label{eq:classical-Burau-formulas}
\sigma_i \longmapsto I_{i-1} \oplus \left[ \begin{array}{cc}
1-t & 1 \\
t & 0
\end{array} \right] \oplus I_{n-i-1}
\qquad\text{and}\qquad
\sigma_i \longmapsto I_{i-2} \oplus \left[ \begin{array}{ccc}
1 & 0 & 0 \\
t & -t & 1 \\
0 & 0 & 1
\end{array} \right] \oplus I_{n-i-2}
\end{equation}
respectively. We note that the Burau representation is sometimes defined using the transposes of these matrices, such as in \cite{KasselTuraev}, but this is not an essential difference, since the Burau representation is equivalent to its transpose. For more details of these representations, see \cite{KasselTuraev}.

\paragraph{From classical braids to loop braids.}

There is an obvious map
\begin{equation}
\label{eq:map-t}
t \colon C_n(\bR^2) \longrightarrow \cE^+(U_n,\bD^3)
\end{equation}
given by replacing each point in the given configuration with a small circle, oriented positively in $\bR^2$, and then including this unlinked configuration of circles into $\bR^3$. On fundamental groups, this induces a homomorphism $\B_n \to \LB_n$ sending the standard generators of $\B_n$ to the elements $\tau_1,\ldots,\tau_{n-1}$ of $\LB_n$. In particular, this map factors through the projection $\B_n \to \mathfrak{S}_n$ onto the symmetric group on $n$ letters. There is also a more interesting map
\begin{equation}
\label{eq:map-s}
s \colon C_n(\bR^2) \longrightarrow \cE^+(U_n,\bD^3)
\end{equation}
defined as follows. Let us identify $\bR^2$ with the right-hand $xz$-plane (the half where the $x$-coordinate is positive) and the interior of $\bD^3$ with $\bR^3$. Given a configuration of $n$ points in the right-hand $xz$-plane, we produce an $n$-component unlink by rotating the configuration about the $z$-axis, tracing out $n$ circles while doing so, which all lie in planes parallel to the $xy$-plane; see Figure \ref{fig:braid-to-loop-braid}. We orient these circles positively with respect to the parallel copy of the $xy$-plane in which they lie.

\begin{figure}[ht]
    \centering
    \includegraphics[scale=0.8]{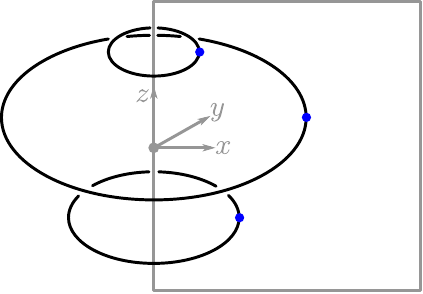}
    \caption{Given a configuration of (blue) points in the right-hand $xz$-plane, we rotate about the $z$-axis as shown to produce a configuration of unlinked circles in $\bR^3$.}
    \label{fig:braid-to-loop-braid}
\end{figure}

To see that the induced homomorphism on fundamental groups is injective, let us take a basepoint in $C_n(\bR^2)$ where the configuration points are arranged in a line along the $x$-axis; this corresponds to a basepoint of $\cE^+(U_n,\bD^3)$ with $n$ concentric circles in the $xy$-plane, centred at the origin. The standard generators of $\B_n$ are sent to loops of the form illustrated on the left-hand side of Figure \ref{fig:sigma-change-basepoint}. Changing the basepoint of $\cE^+(U_n,\bD^3)$ to the one chosen earlier, with \emph{non-concentric} circles on the $xy$-plane, this corresponds to the loop on the right-hand side of Figure \ref{fig:sigma-change-basepoint}, which is the generator $\sigma_i$ of $\LB_n$. By \cite[Proposition 4.3]{BrendleHatcher2013Configurationspacesrings}, the group homomorphism $\B_n \to \LB_n$ sending the standard generators of $\B_n$ to the elements $\sigma_1,\ldots,\sigma_{n-1} \in \LB_n$ is injective. As we have just seen, the map \eqref{eq:map-s} realises this homomorphism at the space level, and so:

\begin{prop}
\label{prop:braids-to-loop-braids}
The map \eqref{eq:map-s} induces an injection on $\pi_1$.
\end{prop}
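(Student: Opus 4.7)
The plan is to leverage the discussion immediately preceding the statement. The strategy is to factor the claim through the composition $\B_n \to \LB_n$ of known homomorphisms, so that injectivity of the map on $\pi_1$ reduces to a combinatorial/topological identification of the image loops plus a citation.

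First I would fix the basepoint of $C_n(\bR^2)$ to be the configuration of $n$ points arranged in a row along the positive $x$-axis (identified with the right-hand $xz$-plane). Applying the rotation construction $s$ yields the configuration of $n$ concentric, coplanar circles in the $xy$-plane, which I take as the corresponding basepoint of $\cE^+(U_n,\bD^3)$. Standard generators of $\B_n = \pi_1(C_n(\bR^2))$ are represented by half-twist paths exchanging adjacent configuration points $p_i$ and $p_{i+1}$; under $s$ these are sent to the loops of unlinks pictured on the left of Figure~\ref{fig:sigma-change-basepoint}, in which the $i$-th and $(i+1)$-st concentric circles swap by one passing through the other.

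Next I would compare this ``concentric-circle'' basepoint with the standard basepoint of $\cE^+(U_n,\bD^3)$ used throughout the paper, in which the $n$ circles are arranged side by side along a row in the $xy$-plane. The two basepoints lie in the same path-component (they are both unlinks of $n$ circles in the $xy$-plane), so any choice of path between them induces an isomorphism of fundamental groups by conjugation. Choosing the obvious path that rigidly slides the nested circles outward into a row, one checks by inspection of Figure~\ref{fig:sigma-change-basepoint} that the conjugate of the loop described above is exactly the standard generator $\sigma_i \in \LB_n$. Consequently, $s_* \colon \B_n \to \LB_n$ coincides, up to this change-of-basepoint isomorphism, with the homomorphism sending the standard generators of $\B_n$ to $\sigma_1,\ldots,\sigma_{n-1}$.

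The conclusion then follows from \cite[Proposition 4.3]{BrendleHatcher2013Configurationspacesrings}, which asserts precisely that the latter homomorphism is injective. The main (and only real) obstacle is the explicit basepoint-change verification identifying the image loops as the $\sigma_i$; this is essentially the content of Figure~\ref{fig:sigma-change-basepoint} and should be written as a short paragraph with explicit reference to the figure rather than a calculation.
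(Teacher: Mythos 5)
Your proposal is correct and follows essentially the same route as the paper: the paper likewise takes the collinear basepoint of $C_n(\bR^2)$, observes that $s$ sends it to the concentric-circle unlink, identifies the image loops after a change of basepoint as the generators $\sigma_i$ via Figure~\ref{fig:sigma-change-basepoint}, and concludes by citing \cite[Proposition 4.3]{BrendleHatcher2013Configurationspacesrings}.
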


\begin{rmk}
The map \eqref{eq:map-s} has also been described in \S 6 of \cite{BellingeriBodin}, where its image is called the \emph{configuration space of linear necklaces}. In particular, \cite[Theorem 6.1]{BellingeriBodin} is equivalent to Proposition \ref{prop:braids-to-loop-braids} under this interpretation. A small difference is that the map of \cite{BellingeriBodin} has the space $\cU\cR_n$ as target (see \cite{BrendleHatcher2013Configurationspacesrings} for this notation), whereas \eqref{eq:map-s} has $\cE^+(U_n,\bD^3)$ as target. But \eqref{eq:map-s} factors as
\[
C_n(\bR^2) \longrightarrow \cU\cR_n \longrightarrow \cR_n^+ \longrightarrow \cE^+(U_n,\bD^3),
\]
where the left-hand arrow is the map of \cite{BellingeriBodin}. The middle map is a $\pi_1$-isomorphism by \cite[Proposition 2.3]{BrendleHatcher2013Configurationspacesrings} and the right-hand map is a homotopy equivalence by \cite[Theorem 1]{BrendleHatcher2013Configurationspacesrings}.
\end{rmk}

\begin{figure}[ht]
    \centering
    \includegraphics[scale=0.8]{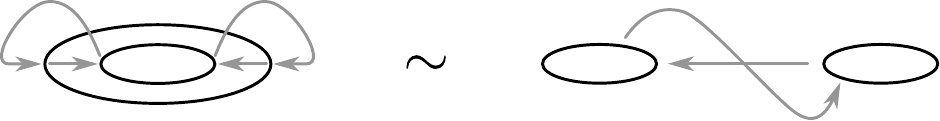}
    \caption{The image of the $i$-th standard generator of $\B_n$ under the map $s_*$ (left) corresponds, by a change of basepoint, to the element $\sigma_i$ of $\LB_n$ (right).}
    \label{fig:sigma-change-basepoint}
\end{figure}

\paragraph{Burau representations of loop braid groups.}

A natural question is whether, and how, one may extend the Burau representations \eqref{eq:Burau-rep} and \eqref{eq:reduced-Burau-rep} along the inclusions
\begin{equation}
\begin{tikzpicture}
[x=1mm,y=1mm]
\node (l) at (0,0) {$\B_n$};
\node (m) at (25,0) {$\LB_n$};
\node (r) at (50,0) {$\LB'_n .$};
\inclusion{above}{$\eqref{eq:map-s}_*$}{(l)}{(m)}
\inclusion{above}{$\eqref{eq:LB-to-LBprime}$}{(m)}{(r)}
\end{tikzpicture}
\end{equation}
The unreduced Burau representation \eqref{eq:Burau-rep} has been extended to $\LB_n$ by Vershinin~\cite{VershininBurau}, using a presentation of $\LB_n$ and by assigning explicit matrices to generators. (More precisely, Vershinin's representation of $\LB_n$ restricts to the \emph{transpose} of the unreduced Burau representation of $\B_n$, according to our conventions.)

Our approach will instead be topological, analogous to the description above of the classical Burau representations of $\B_n$, via the homology of covering spaces. In each case, we will find a natural generating set of the relevant homology group, and calculate the matrix that each standard generator of the loop braid group is sent to.

\begin{notation}
We write $R = \bZ[\bZ] = \bZ[t^{\pm 1}]$ and $S = \bZ[\bZ/2] = \bZ[t^{\pm 1}] / (t^2 - 1)$.
\end{notation}

In this notation, the unreduced and reduced Burau representations are $R$-linear actions $\B_n \curvearrowright R^{\oplus n}$ and $\B_n \curvearrowright R^{\oplus n-1}$ respectively. The case of the non-extended loop braid groups is straightforward:

\begin{athm}
\label{thm:non-extended}
These $R$-linear actions extend to $R$-linear actions $\LB_n \curvearrowright R^{\oplus n}$ and $\LB_n \curvearrowright R^{\oplus n-1}$. \\
Explicit matrices are given in equations \eqref{eq:unreduced-Burau-LB} and \eqref{eq:reduced-Burau-LB} respectively.
\end{athm}

To extend further to the extended loop braid groups is a little more subtle. We must first reduce modulo $t^2 - 1$, in other words tensor $- \otimes_R S$ to obtain $S$-linear actions $\LB_n \curvearrowright S^{\oplus n}$ and $\LB_n \curvearrowright S^{\oplus n-1}$. The unreduced Burau representation then extends directly:

\begin{athm}
\label{thm:extended-unreduced}
The $S$-linear action $\LB_n \curvearrowright S^{\oplus n}$ extends to an $S$-linear action $\LB'_n \curvearrowright S^{\oplus n}$. \\
Explicit matrices are given in equations \eqref{eq:unreduced-Burau-LB} and \eqref{eq:unreduced-Burau-LBprime}.
\end{athm}

The reduced Burau representation does not extend directly; instead:

\begin{athm}
\label{thm:extended-reduced}
The $S$-linear action $\LB_n \curvearrowright S^{\oplus n-1}$ is a subrepresentation of an $S$-linear action $\LB_n \curvearrowright S^{\oplus n-1} \oplus S/(t-1)$, and this extends to an $S$-linear action $\LB'_n \curvearrowright S^{\oplus n-1} \oplus S/(t-1)$. \\
Explicit matrices are given in Table \ref{tab:reduced-Burau-LBprime} on page \pageref{tab:reduced-Burau-LBprime}.
\end{athm}

We emphasise that these extensions of the Burau representations to $\LB_n$ and $\LB'_n$ are precisely those that arise naturally via actions on first homology groups of covering spaces, mirroring the topological construction of the classical Burau representations. As a partial summary, we have
\begin{equation}
\label{eq:Burau-summary}
\centering
\begin{split}
\begin{tikzpicture}
[x=1mm,y=1mm]
\node at (0,15) [font=\small,blue] {$x_1,\ldots,x_{n-1}$};
\node at (30,15) [font=\small,blue] {$x_1,\ldots,x_{n-1}$};
\node at (70,15) [font=\small,blue] {$x_1,\ldots,x_{n-1},y$};
\node at (105,15) [font=\small,blue] {$a_1,\ldots,a_n$};
\node (t1) at (0,10) {$R^{\oplus n-1}$};
\node (t2) at (30,10) {$S^{\oplus n-1}$};
\node (t3) at (70,10) {$S^{\oplus n-1} \oplus S/(t-1)$};
\node (t4) at (105,10) {$S^{\oplus n}$};
\node (b1) at (0,0) {$H_1(\widetilde{\bD}^3_n ; \bZ)$};
\node (b2) at (30,0) {$H_1(\widetilde{\bD}^3_n ; \bZ) \otimes_R S$};
\node (b3) at (70,0) {$H_1(\widehat{\bD}^3_n ; \bZ)$};
\node (b4) at (105,0) {$H_1(\widehat{\bD}^3_n , \{v,tv\} ; \bZ),$};
\node at (0,5) {\rotatebox{90}{$=$}};
\node at (30,5) {\rotatebox{90}{$=$}};
\node at (70,5) {\rotatebox{90}{$=$}};
\node at (105,5) {\rotatebox{90}{$=$}};
\draw[->>] (t1) to (t2);
\incl{(t2)}{(t3)}
\incl{(t3)}{(t4)}
\end{tikzpicture}
\end{split}
\end{equation}
where from left to right we have (1) the reduced Burau representation of $\LB_n$ over $R = \bZ[t^{\pm 1}]$ (Theorem \ref{thm:non-extended}), (2) its reduction modulo $t^2 - 1$ over $S = \bZ[t^{\pm 1}] / (t^2 - 1)$, (3) the inclusion of (2) into the reduced Burau representation of $\LB'_n$ over $S$ (Theorem \ref{thm:extended-reduced}) and (4) its further inclusion into the unreduced Burau representation of $\LB'_n$ (Theorem \ref{thm:extended-unreduced}). In each case, an ordered generating set corresponding to the direct sum decomposition is given in blue.

\begin{rmk}
The matrices of the representations in Theorems \ref{thm:non-extended} and \ref{thm:extended-unreduced} are the ``obvious'' matrices that one may guess by analogy with the matrices for the classical (reduced and unreduced) Burau representations. However, the matrices for the reduced Burau representation of the extended loop braid groups, from Theorem \ref{thm:extended-reduced}, do not seem combinatorially or algebraically obvious. However, they arise very naturally \emph{topologically}. Two additional subtleties in this case are the appearance of torsion in the $S/(t-1)$ summand and the \emph{non-locality} of the matrices for the $\rho_i$ generators.
\end{rmk}

\begin{rmk}
It is stated in Theorem \ref{thm:extended-reduced} that the $\LB_n$-representation $S^{\oplus n-1}$ is a subrepresentation of an $\LB_n$-representation $S^{\oplus n-1} \oplus S/(t-1)$. We remark that it is however \emph{not} a direct summand of the $\LB_n$-representation $S^{\oplus n-1} \oplus S/(t-1)$.
\end{rmk}

See \S\ref{s-properties} for further remarks on reducibility, kernel and other properties of these representations.

\begin{rmk}
\label{rmk:Lawrence-Bigelow}
The Burau representations of the classical braid groups $\B_n$ form the first of an infinite family of \emph{Lawrence-Bigelow} representations \cite{Lawrence1,BigelowHomrep}, and the Burau representations of the loop braid groups $\LB_n$ (or extended loop braid groups $\LB'_n$) may be extended, in more than one way, to an analogous infinite family of representations; see \cite{PalmerSoulie2019}. Explicit bases for some of these representations are computed in \cite[\S 3]{palmersoulie} and further investigation of these families of representations of $\LB_n$ and of $\LB'_n$ will be the subject of forthcoming work.

These representations are particularly interesting as the representation theory of the loop braid groups is in the early stages: so far, few other results are known on extensions of representations of the braid groups to loop braid groups and some of their particular subgroups; see K\'{a}d\'{a}r, Martin, Rowell and Wang \cite{MartinRowell2} and Bellingeri and the second author \cite{Bellingerisoulie}. Furthermore, Damiani, Martin and Rowell \cite{damianimartinrowell} have recently studied a finite dimensional quotient $\mathbf{LH}_{n}$ of the group algebra of $\LB_{n}$, mimicking the braid group/Iwahori-Hecke algebra paradigm. In particular, the unreduced Burau representation of $\LB_{n}$ (Theorem \ref{thm:non-extended}) factors through this quotient algebra $\mathbf{LH}_{n}$; see \cite[\S 3.2]{damianimartinrowell}.
\end{rmk}

\paragraph*{Acknowledgements.}
The authors would like to thank Paolo Bellingeri for his comments on the first version of this paper, in particular for drawing to their attention the reference \cite{BellingeriBodin}.

\section{Action on the homology of covering spaces}

Let $\varphi$ be a diffeomorphism of the $3$-ball $\bD^3$ that restricts to the identity near the boundary. We may restrict $\varphi$ to our chosen unlink $U_n$ in the interior of $\bD^3$ to obtain a new embedding $U_n \hookrightarrow \bD^3$. Since $\varphi$ is isotopic to a diffeomorphism that acts by the identity on $U_n$ (it may be isotoped to act by the identity on a larger and larger collar neighbourhood of the boundary, until this collar neighbourhood contains $U_n$), this new embedding is isotopic to the inclusion, hence an element of $\mathrm{Emb}^u(U_n,\bD^3)$. We therefore have a restriction map
\begin{equation}
\label{eq:restriction-map}
\mathrm{Diff}_\partial(\bD^3) \longrightarrow \mathrm{Emb}^u(U_n,\bD^3),
\end{equation}
where $\mathrm{Diff}_\partial(\bD^3)$ denotes the topological group of diffeomorphisms that are the identity on a neighbourhood of the boundary, equipped with the smooth Whitney topology. The map \eqref{eq:restriction-map} is a locally trivial fibration \cite{Palais1960Localtrivialityof, Cerf1961Topologiedecertains, Lima1963localtrivialityof} and the quotient map
\begin{equation}
\label{eq:quotient-map}
\mathrm{Emb}^u(U_n,\bD^3) \relbar\joinrel\twoheadrightarrow \mathrm{Emb}^u(U_n,\bD^3) / \mathrm{Diff}(U_n) = \cE(U_n,\bD^3)
\end{equation}
is also a locally trivial fibration \cite{BinzFischer1981} (see also \cite[\S 4]{Palmer2018HomologicalstabilitymoduliI} for both of these). Putting together \eqref{eq:restriction-map} and \eqref{eq:quotient-map}, we have a locally trivial fibration
\begin{equation}
\label{eq:fibration}
\mathrm{Diff}_\partial(\bD^3) \longrightarrow \cE(U_n,\bD^3).
\end{equation}
If we modify \eqref{eq:quotient-map} to quotient only by $\mathrm{Diff}^+(U_n)$, it remains a locally trivial fibration, and together with \eqref{eq:restriction-map} we obtain a locally trivial fibration
\begin{equation}
\label{eq:fibration-oriented}
\mathrm{Diff}_\partial(\bD^3) \longrightarrow \cE^+(U_n,\bD^3).
\end{equation}
Together with Hatcher's proof \cite{Hatcher1983} of the Smale conjecture, this implies the following, where $\mathrm{Diff}_\partial(\bD^3,U_n) \leq \mathrm{Diff}_\partial(\bD^3)$ is the subgroup of diffeomorphisms that preserve $U_n$ (setwise) and $\mathrm{Diff}_\partial(\bD^3,U_n^+) \leq \mathrm{Diff}_\partial(\bD^3)$ is the subgroup of diffeomorphisms that preserve $U_n$ and its orientation.

\begin{lem}
\label{lem:mcg-interpretation}
There are isomorphisms
\begin{align*}
\LB'_n = \pi_1(\cE(U_n,\bD^3)) &\cong \pi_0(\mathrm{Diff}_\partial(\bD^3,U_n)) \\
\LB_n = \pi_1(\cE^+(U_n,\bD^3)) &\cong \pi_0(\mathrm{Diff}_\partial(\bD^3,U_n^+)).
\end{align*}
\end{lem}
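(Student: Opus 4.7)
The plan is to apply the long exact sequence in homotopy to the two locally trivial fibrations \eqref{eq:fibration} and \eqref{eq:fibration-oriented}, using Hatcher's proof of the Smale conjecture as the crucial input. Since the paper presents the fibrations directly before the lemma and introduces the Smale conjecture explicitly, this is clearly the intended approach.

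First I would identify the fibre of each fibration. For \eqref{eq:fibration}, the fibre over the basepoint of $\cE(U_n,\bD^3)$ is, by construction, the preimage under the restriction map \eqref{eq:restriction-map} of the $\mathrm{Diff}(U_n)$-orbit of the inclusion $\iota \colon U_n \hookrightarrow \bD^3$. A diffeomorphism $\varphi \in \mathrm{Diff}_\partial(\bD^3)$ satisfies $\varphi \circ \iota = \iota \circ g$ for some $g \in \mathrm{Diff}(U_n)$ if and only if $\varphi(U_n) = U_n$ setwise, so this fibre is exactly $\mathrm{Diff}_\partial(\bD^3, U_n)$. An identical argument, replacing $\mathrm{Diff}(U_n)$ by $\mathrm{Diff}^+(U_n)$, shows that the fibre of \eqref{eq:fibration-oriented} is $\mathrm{Diff}_\partial(\bD^3, U_n^+)$.

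Next I would invoke Hatcher's proof \cite{Hatcher1983} of the Smale conjecture, which implies that $\mathrm{Diff}_\partial(\bD^3)$ is contractible; in particular $\pi_0 = \pi_1 = 0$. The long exact sequence of the fibration \eqref{eq:fibration} then reads
\[
\pi_1(\mathrm{Diff}_\partial(\bD^3)) \longrightarrow \pi_1(\cE(U_n,\bD^3)) \longrightarrow \pi_0(\mathrm{Diff}_\partial(\bD^3,U_n)) \longrightarrow \pi_0(\mathrm{Diff}_\partial(\bD^3)),
\]
with the two outer groups trivial, yielding the first isomorphism. The second isomorphism follows identically from the long exact sequence of \eqref{eq:fibration-oriented}.

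There is no real obstacle here, since the two key inputs (the local triviality of the fibrations and the Smale conjecture) have already been cited. The only point requiring care is the fibre identification, which hinges on the (routine) observation that the $\mathrm{Diff}(U_n)$-orbit (respectively $\mathrm{Diff}^+(U_n)$-orbit) of the inclusion $\iota$ in $\mathrm{Emb}^u(U_n,\bD^3)$ consists precisely of the embeddings with image $U_n$ (respectively, with image $U_n$ and inducing the chosen orientation).
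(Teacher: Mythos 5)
Your proposal is correct and matches the paper's own argument exactly: the paper also deduces both isomorphisms from the long exact sequences of the fibrations \eqref{eq:fibration} and \eqref{eq:fibration-oriented}, using Hatcher's theorem that $\mathrm{Diff}_\partial(\bD^3)$ is contractible. Your explicit identification of the fibres is a detail the paper leaves implicit, but it is the right one.
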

\begin{proof}
The topological group $\mathrm{Diff}_\partial(\bD^3)$ is contractible, in particular simply-connected, by \cite{Hatcher1983}, and these isomorphisms then follow from the long exact sequences associated to \eqref{eq:fibration} and \eqref{eq:fibration-oriented}.
\end{proof}

\begin{notation}
We will abbreviate $\bD^3_n = \bD^3 \smallsetminus U_n$, where $U_n$ is the $n$-component unlink in the interior of $\bD^3$ chosen previously. See Figure \ref{fig:unlink-complement} for an illustration of a particular choice.
\end{notation}

By the mapping class group interpretation of loop braid groups (Lemma \ref{lem:mcg-interpretation}), the group $\LB'_n$ (and hence also its subgroup $\LB_n$) acts, up to homotopy, on the unlink-complement $\bD^3_n$ by diffeomorphisms (in particular, homeomorphisms).

\begin{rmk}
We will speak of actions of mapping class groups up to homotopy, which induce (strict) actions on homology. An alternative, equivalent viewpoint would be that the corresponding \emph{diffeomorphism group} acts (strictly) at the level of spaces, and then observing that its induced action on homology factors through the mapping class group, since homology groups are discrete.
\end{rmk}

\begin{figure}
    \centering
    \includegraphics[scale=0.5]{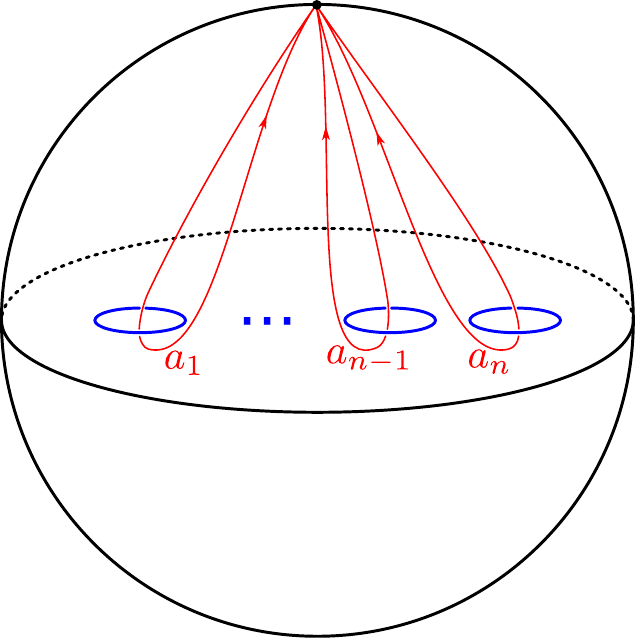}
    \caption{The unlink-complement $\bD^3_n$ with free generators $a_1,\ldots,a_n$ for $\pi_1(\bD^3_n) \cong F_n$.}
    \label{fig:unlink-complement}
\end{figure}

Note that the fundamental group of $\bD^3_n$ is the free group $F_n$ on $n$ generators. This is easy to see: the unlink-complement $\bD^3_n \subseteq \bD^3$ deformation retracts onto a wedge of $n$ circles and $n$ copies of the $2$-sphere. The $n$ circles $a_1,\ldots,a_n$ are shown in Figure \ref{fig:unlink-complement}. Now let
\[
\phi \colon \pi_1(\bD^3_n) \longrightarrow \bZ
\]
be the surjective homomorphism defined by $\phi(a_i) = 1$ for all $i=1,\ldots,n$ and let
\[
\phi' \colon \pi_1(\bD^3_n) \longrightarrow \bZ / 2\bZ
\]
be the composition of $\phi$ with the unique surjection $\bZ \to \bZ/2\bZ$.

\begin{defn}
We denote by $\widetilde{\bD}^3_n$ the regular covering space corresponding to $\mathrm{ker}(\phi)$ and by $\widehat{\bD}^3_n$ the regular covering space corresponding to $\mathrm{ker}(\phi')$. We therefore have regular coverings
\[
\eta \colon \widetilde{\bD}^3_n \longrightarrow \bD^3_n \qquad\text{and}\qquad \eta' \colon \widehat{\bD}^3_n \longrightarrow \bD^3_n
\]
whose deck transformations groups are $\bZ$ and $\bZ/2\bZ$ respectively.
\end{defn}

In general, if a group $G$ acts (up to homotopy) on a based space $X$ and we choose a surjection $\psi \colon \pi_1(X) \twoheadrightarrow Q$ that is invariant under the induced action of $G$ on $\pi_1(X)$, then the $G$-action on $X$ lifts uniquely to the regular covering space corresponding to $\psi$ and commutes with the action of $Q$ by deck transformations.

Let us first take $G=\LB_n$ and $X=\bD_n^3$ with basepoint $* \in \partial \bD^3 = \partial \bD_n^3$. We note that the quotient $\phi$ is invariant under the action of $\LB_n$: for this it suffices to check that each generator $\tau_i , \sigma_i$ of $\LB_n$ sends each generator $a_j$ of $\pi_1(\bD_n^3)$ to an element in $\phi^{-1}(1)$, and this follows since, up to conjugation, $\tau_i$ and $\sigma_i$ simply permute the generators $a_j$. We therefore have an induced action (up to homotopy) of $\LB_n$ on $\widetilde{\bD}^3_n$ commuting with the deck transformation action of $\bZ$. Thus the first integral homology groups
\begin{equation}
\label{eq:LB-reps}
H_1 \bigl( \widetilde{\bD}^3_n ; \bZ \bigr) \qquad\text{and}\qquad H_1 \bigl( \widetilde{\bD}^3_n , \eta^{-1}(*) ; \bZ \bigr)
\end{equation}
are $\bZ[\bZ]$-modules via the deck transformation action, and are $\LB_n$-representations over $\bZ[\bZ]$ via the lifted $\LB_n$-action on $\widetilde{\bD}^3_n$.

\begin{defn}
The $\LB_n$-representations \eqref{eq:LB-reps} are the \emph{reduced} and the \emph{unreduced Burau representations} of loop braid groups over $\bZ[\bZ]=R$.
\end{defn}

Let us now take $G = \LB'_n$ and again $X=\bD_n^3$ with basepoint $* \in \partial \bD^3 = \partial \bD_n^3$. This time $\phi$ is not invariant under the induced action of $\LB'_n$, since, for example, the generator $\rho_i$ sends $a_i \in \phi^{-1}(1)$ to $a_i^{-1} \in \phi^{-1}(-1)$. However, the deeper quotient $\phi'$ (namely $\phi$ reduced mod $2$) is clearly invariant under the action of $\LB'_n$. We therefore have an induced action (up to homotopy) of $\LB'_n$ on $\widehat{\bD}^3_n$ commuting with the deck transformation action of $\bZ/2\bZ$. Thus the first integral homology groups
\begin{equation}
\label{eq:LBprime-reps}
H_1 \bigl( \widehat{\bD}^3_n ; \bZ \bigr) \qquad\text{and}\qquad H_1 \bigl( \widehat{\bD}^3_n , (\eta')^{-1}(*) ; \bZ \bigr)
\end{equation}
are $\bZ[\bZ/2\bZ]$-modules via the deck transformation action, and are $\LB'_n$-representations over $\bZ[\bZ/2\bZ]$ via the lifted $\LB'_n$-action on $\widehat{\bD}^3_n$.

\begin{defn}
The $\LB'_n$-representations \eqref{eq:LBprime-reps} are the \emph{reduced} and the \emph{unreduced Burau representations} of extended loop braid groups over $\bZ[\bZ/2\bZ]=S$.
\end{defn}

Let us make these covering spaces more concrete by building explicit models for each of them. We embed $n$ pairwise disjoint closed $3$-discs into the interior of the unit $3$-disc $\bD^3$ as pictured in Figure \ref{fig:lens-shapes}, so that each little $3$-disc looks like a ``lens shape'' and the union of their equators is precisely the $n$-component unlink that we fixed earlier. Let $\mathring{\bD}^3_n$ denote $\bD^3_n$ minus the interiors of these $n$ little $3$-discs, equivalently, $\bD^3$ minus the interiors and equators of the $n$ little $3$-discs. Also, write $N_i$ for the open northern hemisphere of the boundary of the $i$-th little $3$-disc, and write $S_i$ for the open southern hemisphere of the boundary of the $i$-th little $3$-disc. Now consider
\[
\bZ \times \mathring{\bD}^3_n
\]
and glue $\{j\} \times N_i$ to $\{j-1\} \times S_i$ via the homeomorphism $N_i \cong S_i$ given by reflection in the plane passing through the equator. This is an explicit model for $\widetilde{\bD}^3_n$. Similarly, we may consider
\[
\bZ/2\bZ \times \mathring{\bD}^3_n
\]
and glue as before, where $j$ is now considered mod $2$. This is an explicit model for $\widehat{\bD}^3_n$.

\begin{figure}
    \centering
    \includegraphics[scale=0.5]{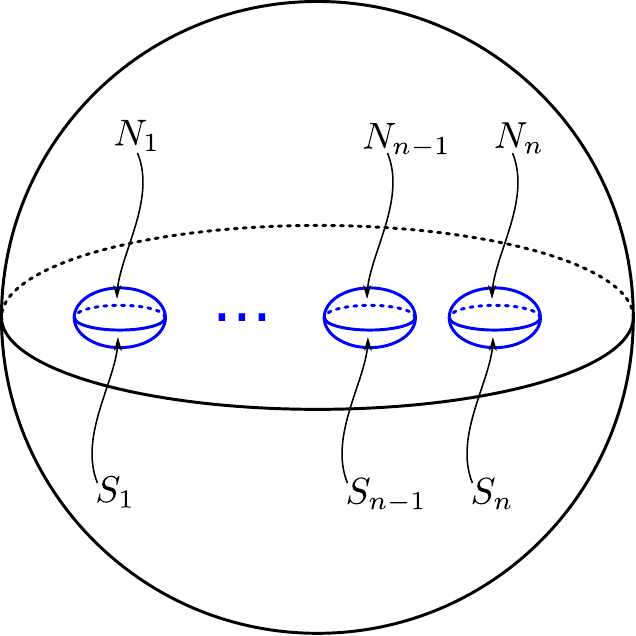}
    \caption{The complement $\mathring{\bD}^3_n$ of the interiors and equators of $n$ closed little $3$-discs (``lens shapes'') in the interior of the closed unit $3$-disc $\bD^3$. The boundary of $\mathring{\bD}^3_n$ decomposes as the disjoint union of $2n+1$ components: $\partial \mathring{\bD}^3_n = \partial \bD^3 \sqcup N_1 \sqcup \ldots \sqcup N_n \sqcup S_1 \sqcup \ldots \sqcup S_n$.}
    \label{fig:lens-shapes}
\end{figure}

\section{Matrices for non-extended loop braid groups}
\label{s-non-extended}

\begin{figure}
    \centering
    \includegraphics[scale=0.8]{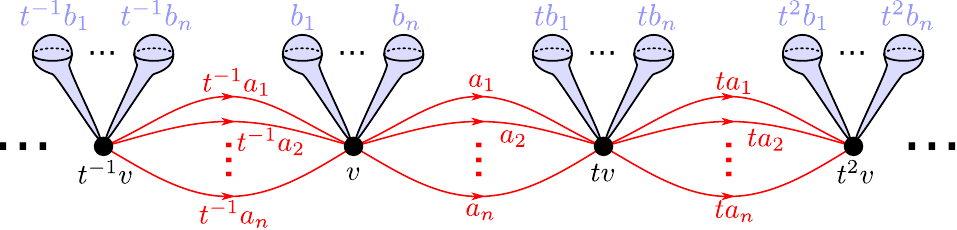}
    \caption{The deformation retract $X$ of the $\bZ$-covering $\widetilde{\bD}^3_n$.}
    \label{fig:infinite-covering}
\end{figure}

We first consider the $\LB_n$-representations \eqref{eq:LB-reps}. The calculations of these representations are unsurprising, but they are a useful warm-up to the slightly more subtle ones in the next section, for the $\LB'_n$-representations \eqref{eq:LBprime-reps}.

\paragraph{The modules.}
As noted above, the unlink-complement $\bD^3_n$ deformation retracts onto a wedge of $n$ circles and $n$ copies of the $2$-sphere. This deformation retraction lifts to a deformation retraction of the covering space $\widetilde{\bD}^3_n$ onto the space pictured in Figure \ref{fig:infinite-covering}. This is an infinite $2$-dimensional cell complex $X$ with vertices indexed by $\bZ$, with exactly $n$ edges between consecutive vertices (and none between non-consecutive vertices) and with exactly $n$ copies of the $2$-sphere wedged onto each vertex. Its fundamental group is freely generated by
$t^k . (a_2 \bar{a}_1) , \ldots\ldots , t^k . (a_n \bar{a}_{n-1})$ for all $k \in \bZ$, where $\bar{a}$ denotes the reverse of a path $a$. Abelianising and writing $\bZ[\bZ] = \bZ[t^{\pm 1}]$, we see that its first homology is freely generated, as a $\bZ[t^{\pm 1}]$-module, by $x_1 \coloneqq a_2 \bar{a}_1 , \ldots\ldots , x_{n-1} \coloneqq a_n \bar{a}_{n-1}$.

The relative homology group $H_1(\widetilde{\bD}^3_n , \eta^{-1}(*) ; \bZ)$ is isomorphic to the first homology of $X$ relative to its set of vertices (since these vertices are fixed by the deformation retraction described above), which is freely generated, as a $\bZ[t^{\pm 1}]$-module, by $a_1,\ldots,a_n$. Summarising, we have natural isomorphisms
\begin{align*}
H_1 \bigl( \widetilde{\bD}^3_n , \eta^{-1}(*) ; \bZ \bigr) &\cong \bZ[t^{\pm 1}]\{ a_1,\ldots,a_n \} \\
H_1 \bigl( \widetilde{\bD}^3_n ; \bZ \bigr) &\cong \bZ[t^{\pm 1}]\{ x_1,\ldots,x_{n-1} \} ,
\end{align*}
and the canonical homomorphism $H_1(\widetilde{\bD}^3_n ; \bZ) \to H_1(\widetilde{\bD}^3_n , \eta^{-1}(*) ; \bZ)$ is given under these identifications by $x_i \mapsto a_{i+1} - a_i$.

\paragraph{The unreduced representation.}

\begin{figure}
    \centering
    \includegraphics[scale=0.7]{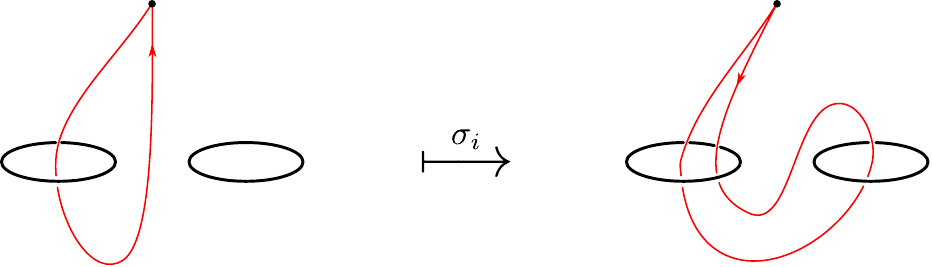}
    \caption{The action of $\sigma_i \in \LB_n$ on the homological generator $a_i$. The right-hand loop may be decomposed (writing from the left to right) as $a_i \cdot ta_{i+1} \cdot t\bar{a}_i$.}
    \label{fig:action-of-sigma}
\end{figure}

It is easy to calculate visually how the $\LB_n$ generators $\tau_i$ and $\sigma_i$ act on the homological generators $a_j$. Clearly $\tau_i$ simply interchanges $a_i$ and $a_{i+1}$. On the other hand, $\sigma_i$ acts by
\[
\sigma_i(a_i) = (1-t).a_i + t.a_{i+1} \qquad\qquad \sigma_i(a_{i+1}) = a_i \qquad\qquad \sigma_i(a_j) = a_j \; (\text{for } j \not\in \{i,i+1\}),
\]
where the first formula comes from the fact that, at the fundamental group level, $\sigma_i$ sends the loop $a_i$ to the loop $a_i \cdot ta_{i+1} \cdot t\bar{a}_i$, which may be read off from Figure \ref{fig:action-of-sigma}. Thus we see that the matrices for the unreduced Burau representation $\LB_n \to \mathrm{GL}_n(\bZ[t^{\pm 1}])$ are given by
\begin{equation}
\label{eq:unreduced-Burau-LB}
\tau_i \longmapsto I_{i-1} \oplus \left[ \begin{array}{cc}
0 & 1 \\
1 & 0
\end{array} \right] \oplus I_{n-i-1}
\qquad\text{and}\qquad
\sigma_i \longmapsto I_{i-1} \oplus \left[ \begin{array}{cc}
1-t & 1 \\
t & 0
\end{array} \right] \oplus I_{n-i-1} .
\end{equation}

\begin{rmk}
These are precisely the transposes of the matrices used in \cite{VershininBurau}. Related to this, we note that, as observed in \cite[Theorem 3.2]{Ibrahim2021}, one may extend the (transpose of the) unreduced Burau representation to the virtual braid group $\mathbf{VB}_n \to \mathrm{GL}_n(\bZ[t^{\pm 1},u^{\pm 1}])$ by
\begin{equation}
\label{eq:unreduced-Burau-VB}
\tau_i \longmapsto I_{i-1} \oplus \left[ \begin{array}{cc}
0 & u^{-1} \\
u & 0
\end{array} \right] \oplus I_{n-i-1}
\qquad\text{and}\qquad
\sigma_i \longmapsto I_{i-1} \oplus \left[ \begin{array}{cc}
1-t & t \\
1 & 0
\end{array} \right] \oplus I_{n-i-1} .
\end{equation}
This factors through the projection $\mathbf{VB}_n \twoheadrightarrow \LB_n$ if one sets $u=1$, but in general it does not. It would be interesting to find a topological construction of this representation, in the sense of the present paper, although it is unclear how this could be done, as we are unaware of a topological interpretation of the virtual braid group as a motion group, analogous to the realisation of the loop braid group as the group of motions of an oriented trivial link in $\bR^3$.
\end{rmk}

\paragraph{The reduced representation.}

Using this computation of the unreduced representation, and the explicit formula ($x_i \mapsto a_{i+1} - a_i$) for the inclusion of the reduced representation into the unreduced one, it is an easy exercise to read off the following explicit formulas for the reduced Burau representation $\LB_n \to \mathrm{GL}_{n-1}(\bZ[t^{\pm 1}])$:
\begin{equation}
\label{eq:reduced-Burau-LB}
\tau_i \longmapsto I_{i-2} \oplus \left[ \begin{array}{ccc}
1 & 0 & 0 \\
1 & -1 & 1 \\
0 & 0 & 1
\end{array} \right] \oplus I_{n-i-2}
\qquad\text{and}\qquad
\sigma_i \longmapsto I_{i-2} \oplus \left[ \begin{array}{ccc}
1 & 0 & 0 \\
t & -t & 1 \\
0 & 0 & 1
\end{array} \right] \oplus I_{n-i-2} .
\end{equation}
If $i=1$ or $i=n-1$, one should ignore the ``$I_{-1}$'' on the left or right, and instead remove the left column and top row, respectively the right column and bottom row, from the displayed matrix.

Observe that, when restricted to the $\sigma_i$ generators, the formulas \eqref{eq:unreduced-Burau-LB} and \eqref{eq:reduced-Burau-LB} are precisely the matrices \eqref{eq:classical-Burau-formulas} defining the unreduced and reduced Burau representations of the classical braid groups. This concludes the proof of Theorem \ref{thm:non-extended}.

\paragraph{Action on the second homology.}

\begin{figure}
    \centering
    \includegraphics[scale=0.6]{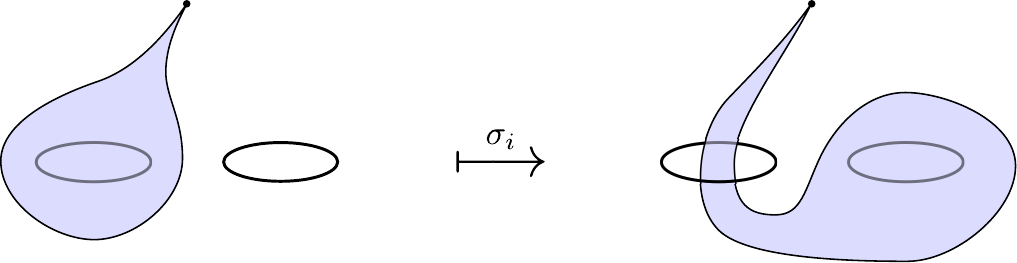}
    \caption{The action of $\sigma_i \in \LB_n$ on the element $b_i \in \pi_2(\widetilde{\bD}_n^3)$. The right-hand side is homotopic to $a_i \cdot b_{i+1}$, where $\cdot$ denotes the action of $\pi_1$ on $\pi_2$.}
    \label{fig:action-of-sigma-on-H2}
\end{figure}

\begin{figure}
    \centering
    \includegraphics[scale=0.6]{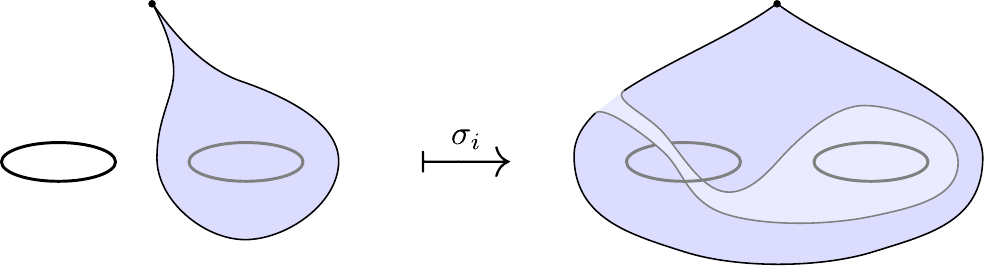}
    \caption{The action of $\sigma_i \in \LB_n$ on the element $b_{i+1} \in \pi_2(\widetilde{\bD}_n^3)$. The right-hand side is homotopic to $b_i + b_{i+1} - a_i \cdot b_{i+1}$, where $\cdot$ denotes the action of $\pi_1$ on $\pi_2$.}
    \label{fig:action-of-sigma-on-H2-a}
\end{figure}

The other non-trivial homology group of the covering space $\widetilde{\bD}_n^3 \simeq X$ is in degree two, where we have $H_2(\widetilde{\bD}_n^3 ; \bZ) \cong \bZ[t^{\pm 1}]\{ b_1,\ldots,b_n \}$, where $b_i$ are illustrated in blue in Figure \ref{fig:infinite-covering}. The generator $\tau_i \in \LB_n$ clearly acts by swapping the homological generators $b_i$ and $b_{i+1}$. The generator $\sigma_i \in \LB_n$ acts as illustrated in Figures \ref{fig:action-of-sigma-on-H2} and \ref{fig:action-of-sigma-on-H2-a}. It sends $b_i$, considered as an element of $\pi_2(\widetilde{\bD}_n^3)$, to $a_i \cdot b_{i+1}$, where $\cdot$ denotes the canonical action of $\pi_1(\widetilde{\bD}_n^3)$ on $\pi_2(\widetilde{\bD}_n^3)$. Inspecting Figure \ref{fig:infinite-covering}, we see that, viewed as an element of $H_2(\widetilde{\bD}_n^3 ; \bZ)$, this is $tb_{i+1}$. Similarly, $\sigma_i$ sends $b_{i+1}$, considered as an element of $\pi_2(\widetilde{\bD}_n^3)$, to $b_i + b_{i+1} - a_i \cdot b_{i+1}$, which is $b_i + (1-t).b_{i+1}$ as an element of $H_2(\widetilde{\bD}_n^3 ; \bZ)$. Thus, with respect to the ordered basis $(b_1,\ldots,b_n)$ of $H_2(\widetilde{\bD}_n^3 ; \bZ)$, this representation $\LB_n \to GL_n(\bZ[t^{\pm 1}])$ is given by
\begin{equation}
\label{eq:representation-on-H2}
\tau_i \longmapsto I_{i-1} \oplus \left[ \begin{array}{cc}
0 & 1 \\
1 & 0
\end{array} \right] \oplus I_{n-i-1}
\qquad\text{and}\qquad
\sigma_i \longmapsto I_{i-1} \oplus \left[ \begin{array}{cc}
0 & 1 \\
t & 1-t
\end{array} \right] \oplus I_{n-i-1} .
\end{equation}
Reversing the ordering, i.e.~using instead the ordered basis $(b_n,\ldots,b_1)$, we obtain
\begin{equation}
\label{eq:representation-on-H2-reversed}
\tau_i \longmapsto I_{i-1} \oplus \left[ \begin{array}{cc}
0 & 1 \\
1 & 0
\end{array} \right] \oplus I_{n-i-1}
\qquad\text{and}\qquad
\sigma_i \longmapsto I_{i-1} \oplus \left[ \begin{array}{cc}
1-t & t \\
1 & 0
\end{array} \right] \oplus I_{n-i-1} ,
\end{equation}
which is the transpose of the unreduced Burau representation \eqref{eq:unreduced-Burau-LB} of $\LB_n$, and agrees with the matrices used in \cite{VershininBurau}.

\section{Matrices for extended loop braid groups}
\label{s-extended}

\begin{figure}
    \centering
    \includegraphics{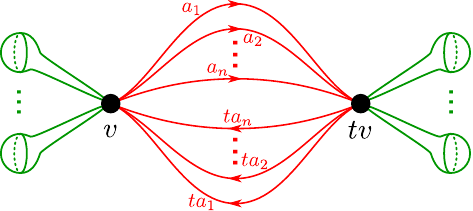}
    \caption{The deformation retract of the double covering $\widehat{\bD}^3_n$ corresponding to quotienting Figure \ref{fig:infinite-covering} by the action of $t^2$.}
    \label{fig:double-covering}
\end{figure}

\paragraph{The modules.}
As in \S\ref{s-non-extended}, the deformation retraction of the unlink-complement $\bD^3_n$ onto a wedge of circles and $2$-spheres lifts to a deformation retraction of its covering $\widehat{\bD}^3_n$ onto the space pictured in Figure \ref{fig:double-covering}. This is a finite $2$-dimensional cell complex with two vertices $\{v,tv\}$, $2n$ edges between them and with $n$ copies of the $2$-sphere wedged onto each vertex. Its fundamental group is freely generated by the $2n-1$ loops
\[
\begin{gathered}
x_1 = a_2 \bar{a}_1, \ldots\ldots , x_{n-1} = a_n \bar{a}_{n-1} \\
tx_1 = ta_2 \, t\bar{a}_1, \ldots\ldots , tx_{n-1} = ta_n \, t\bar{a}_{n-1} \\
y \coloneqq a_n \, ta_n .
\end{gathered}
\]
Hence its first homology $H_1(\widehat{\bD}^3_n ; \bZ)$ is generated, as an abelian group, by the same $2n-1$ loops, viewed as homology classes. The first $2n-2$ of these classes generate a free module of rank $n-1$ over $S = \bZ[\bZ/2] = \bZ[t^{\pm 1}] / (t^2 - 1)$, whereas the last element $y$ generates a summand isomorphic to $\bZ$ viewed as a \emph{trivial} $\bZ[\bZ/2\bZ]$-module, in other words $S / (t-1)$.

The relative homology group $H_1(\widehat{\bD}^3_n , (\eta')^{-1}(*) ; \bZ)$ is isomorphic to the first homology of this complex relative to its vertices $\{t, tv\}$. This is much simpler: it is a free module over $S = \bZ[t^{\pm 1}] / (t^2 - 1)$ of rank $n$, generated by $a_1,\ldots,a_n$. Summarising, we have natural isomorphisms
\begin{align*}
H_1 \bigl( \widehat{\bD}^3_n , (\eta')^{-1}(*) ; \bZ \bigr) &\cong S \{ a_1,\ldots,a_n \} \\
H_1 \bigl( \widetilde{\bD}^3_n ; \bZ \bigr) &\cong S \{ x_1,\ldots,x_{n-1} \} \oplus S/(t-1) \{ y \} .
\end{align*}
The canonical homomorphism $H_1(\widehat{\bD}^3_n ; \bZ) \to H_1(\widehat{\bD}^3_n , (\eta')^{-1}(*) ; \bZ)$ is given under these identifications by $x_i \mapsto a_{i+1} - a_i$ and $y \mapsto (1+t)a_n$. As a matrix, this is:
\begin{equation}
\label{eq:matrix-inclusion}
\left[
\begin{array}{cccccc}
-1 & 0 & 0 &&& \\
1 & -1 & 0 &&& \\
0 & 1 & -1 &&& \\
&&& \smash{\ddots} && \\
&&&& -1 & 0 \\
&&&& 1 & 1+t
\end{array}
\right] .
\end{equation}
Note that this matrix describes an injective homomorphism. (Viewed as an endomorphism of $S^{\oplus n}$, it is of course \emph{not} injective, since $1+t$ is a zero-divisor. But its kernel is precisely the submodule $0^{\oplus n-1} \oplus (t-1)$ of $S^{\oplus n}$ so once we replace the domain with $S^{\oplus n-1} \oplus S/(t-1)$ it becomes injective.)

\paragraph{The unreduced representation.}

The action of the $\LB'_n$ generators $\tau_i$ and $\sigma_i$ on the homological generators $a_j$ is exactly as in \S\ref{s-non-extended}, and given by the matrices \eqref{eq:unreduced-Burau-LB}, considered now over the ring $S = \bZ[t^{\pm 1}] / (t^2 - 1)$ instead of $R = \bZ[t^{\pm 1}]$. The $\LB'_n$ generator $\rho_i$ acts trivially on $a_j$ for $j \neq i$ and sends $a_i$ to $-ta_i$. This last formula comes from the fact that, at the fundamental group level, $\rho_i$ sends the loop $a_i$ to the loop $t\bar{a}_i$, which may be read off from Figure \ref{fig:action-of-rho}. Thus we see that the matrices for the unreduced Burau representation $\LB'_n \to \mathrm{GL}_n(S)$ are given by
\begin{equation}
\label{eq:unreduced-Burau-LBprime}
\eqref{eq:unreduced-Burau-LB} \qquad\text{and}\qquad \rho_i \longmapsto I_{i-1} \oplus \left[
\begin{array}{c}
-t
\end{array}
\right] \oplus I_{n-i} .
\end{equation}
In particular, the restriction of this representation of $\LB'_n$ to the generators $\tau_i$ and $\sigma_i$ (i.e.~to $\LB_n$) is the reduction modulo $t^2$ of the representation \eqref{eq:unreduced-Burau-LB} of Theorem \ref{thm:non-extended}. This concludes the proof of Theorem \ref{thm:extended-unreduced}.

\begin{figure}
    \centering
    \includegraphics[scale=0.7]{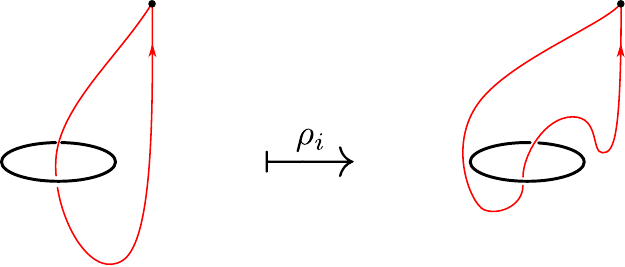}
    \caption{The action of $\rho_i \in \LB'_n$ on the homological generator $a_i$. The right-hand loop is $t\bar{a}_i$. Note that it is not simply $\bar{a}_i$, since this is a path from $tv$ to $v$, whereas $\rho_i(a_i)$ is a path from $v=t^2 v$ to $tv$.}
    \label{fig:action-of-rho}
\end{figure}

\paragraph{The reduced representation.}

It is now a purely algebraic exercise, using the formulas \eqref{eq:unreduced-Burau-LB} and \eqref{eq:unreduced-Burau-LBprime} for the unreduced Burau representation, together with the explicit description \eqref{eq:matrix-inclusion} of the inclusion, to deduce explicit formulas for the reduced Burau representation
\[
\LB'_n \longrightarrow \mathrm{Aut}_S(S^{\oplus n-1} \oplus S/(t-1)).
\]
These are given in Table \ref{tab:reduced-Burau-LBprime}, where we abbreviate $\delta \coloneqq 1+t$. Note that the matrices for the extended generators $\rho_i$ are, in a sense, ``non-local''.

\begin{rmk}
Since these matrices describe automorphisms of $S^{\oplus n-1} \oplus S/(t-1)$, each entry above the bottom row should be considered as an element of $S$, whereas each element of the bottom row should be considered as an element of $S/(t-1) \cong \bZ$. In other words, we set $t=1$ on the bottom row. More precisely, the entries in the bottom row lie in $\mathrm{Hom}_S(S,S/(t-1)) \cong S/(t-1)$, except the bottom-right entry, which lies in $\mathrm{Hom}_S(S/(t-1),S/(t-1)) \cong S/(t-1)$. The entries in the right-hand column (except the bottom-right entry) lie in $\mathrm{Hom}_S(S/(t-1),S) \cong (1+t)S = \delta S \subset S$. (The $S$-modules $S/(t-1)$ and $\delta S$ are abstractly isomorphic, but they are related differently to $S$.)
\end{rmk}

In particular, the restriction of this representation of $\LB'_n$ to the group generators $\tau_i$ and $\sigma_i$ and to the homological generators $x_1,\ldots,x_{n-1}$ is equal to the reduction modulo $t^2$ of the representation \eqref{eq:reduced-Burau-LB} of Theorem \ref{thm:non-extended}. This concludes the proof of Theorem \ref{thm:extended-reduced}.

\begin{rmk}
It is an amusing exercise to verify explicitly that the matrices in Table \ref{tab:reduced-Burau-LBprime} indeed satisfy all of the relations of the extended loop braid group $\LB'_n$, as described for example in \cite[\S 3]{BrendleHatcher2013Configurationspacesrings} or \cite[\S 3]{Damianijourney}. (Warning: the papers \cite{BrendleHatcher2013Configurationspacesrings}, \cite{Damianijourney} and the present paper pairwise disagree on notation for the three families of generators of $\LB'_n$.) One should bear in mind that braid words are written from left to right in \cite{BrendleHatcher2013Configurationspacesrings} and \cite{Damianijourney} (as is usual for composition of loops), whereas matrix multiplication goes from right to left (as is usual for function composition), so in fact the matrices in Table \ref{tab:reduced-Burau-LBprime} satisfy the \emph{opposite} of the relations of $\LB'_n$ described in \cite{BrendleHatcher2013Configurationspacesrings, Damianijourney}. (Technically, this means that we have constructed a representation of the opposite group $(\LB'_n)^{\mathrm{op}}$, but, except for computations, we ignore this subtlety, since this is abstractly isomorphic to $\LB'_n$.) For an explicit verification, using Sage, that the matrices in Table \ref{tab:reduced-Burau-LBprime} satisfy the $40$ relations of the extended loop braid group $\LB'_n$ in the case $n=4$, see the supplementary materials \cite{Supplementary_materials}.
\end{rmk}

\begin{table}[ht]
\centering
\begin{tabular}{|c|c|c|c|}
\hline
& $i=1$ & $2\leq i\leq n-2$ & $i=n-1$ \\ \hline
$\tau_i$ &
$\left[ \begin{array}{cc}
-1 & 1 \\
0 & 1
\end{array} \right] \oplus \mathrm{I}_{n-2}$
&
$\mathrm{I}_{i-2} \oplus \left[ \begin{array}{ccc}
1 & 0 & 0 \\
1 & -1 & 1 \\
0 & 0 & 1
\end{array} \right] \oplus \mathrm{I}_{n-i-1}$
&
$\mathrm{I}_{n-3} \oplus \left[ \begin{array}{ccc}
1 & 0 & 0 \\
1 & -1 & -\delta \\
0 & 0 & 1
\end{array} \right]$
\\ \hline
$\sigma_i$ &
$\left[ \begin{array}{cc}
-t & 1 \\
0 & 1
\end{array} \right] \oplus \mathrm{I}_{n-2}$
&
$\mathrm{I}_{i-2} \oplus \left[ \begin{array}{ccc}
1 & 0 & 0 \\
t & -t & 1 \\
0 & 0 & 1
\end{array} \right] \oplus \mathrm{I}_{n-i-1}$
&
$\mathrm{I}_{n-3} \oplus \left[ \begin{array}{ccc}
1 & 0 & 0 \\
t & -t & -\delta \\
0 & 0 & 1
\end{array} \right]$
\\ \hline\hline
& $i=1$ & $2\leq i\leq n-1$ & $i=n$ \\ \hline
$\rho_i$ &
$\left[ \begin{array}{ccccc}
-t & 0 & \multicolumn{2}{c}{\cdots} & 0 \\
-\delta & \multicolumn{4}{c}{\multirow{4}{*}{\ensuremath{\mathrm{I}_{n-1}}}} \\
\vdots & \multicolumn{4}{c}{} \\
-\delta & \multicolumn{4}{c}{} \\
1 & \multicolumn{4}{c}{}
\end{array} \right]$
&
$\mathrm{I}_{i-2} \oplus \left[ \begin{array}{cccccc}
1 & 0 & 0 & \multicolumn{2}{c}{\cdots} & 0 \\
\delta & -t & 0 & \multicolumn{2}{c}{\cdots} & 0 \\
\delta & -\delta & \multicolumn{4}{c}{\multirow{4}{*}{\ensuremath{\mathrm{I}_{n-i}}}} \\
\vdots & \vdots & \multicolumn{4}{c}{} \\
\delta & -\delta & \multicolumn{4}{c}{} \\
-1 & 1 & \multicolumn{4}{c}{}
\end{array} \right] $
&
$\mathrm{I}_{n-2} \oplus \left[ \begin{array}{cc}
1 & 0 \\
-1 & -1
\end{array} \right]$
\\ \hline
\end{tabular}
\caption{Explicit matrices for the reduced Burau representation of the extended loop braid group $\LB'_n$. Notation: $\delta = 1+t$. All entries lie in $S = \bZ[t^{\pm 1}] / (t^2 - 1)$, except for the bottom row, where they lie in $S / (t-1) \cong \bZ$, in other words we set $t=1$ on the bottom row.}
\label{tab:reduced-Burau-LBprime}
\end{table}

\section{Properties}
\label{s-properties}
\paragraph{Irreducibility.}
The unreduced Burau representations \eqref{eq:unreduced-Burau-LB} of $\LB_n$ and \eqref{eq:unreduced-Burau-LBprime} of $\LB'_n$ are clearly reducible, since they contain the reduced Burau representations \eqref{eq:reduced-Burau-LB} and (Table \ref{tab:reduced-Burau-LBprime}) respectively. The reduced Burau representation \eqref{eq:reduced-Burau-LB} of $\LB_n$ becomes irreducible when we pass to the field of fractions $\bQ(t)$ of $\bZ[t^{\pm 1}]$. This follows because its restriction to the symmetric group $\mathfrak{S}_n \subset \LB_n$ is the standard $(n-1)$-dimensional representation of $\mathfrak{S}_n$, which is irreducible over any field.

On the other hand, for the reduced Burau representation (Table \ref{tab:reduced-Burau-LBprime}) of $\LB'_n$, we cannot directly pass to a field of fractions, since its ground ring $S = \bZ[t^{\pm 1}] / (t^2 - 1)$ is not an integral domain. Instead, we may tensor over $S$ with $\bQ$, setting either $t=-1$ or $t=1$. In the first case, the additional $S/(t-1)$ summand is killed and we obtain an $(n-1)$-dimensional representation of $\LB'_n$ over $\bQ$, which is irreducible, again because its restriction to $\mathfrak{S}_n \subset \LB'_n$ is the standard representation of $\mathfrak{S}_n$. In the second case, we obtain an $n$-dimensional representation:
\begin{equation}
\label{eq:reduced-Burau-LBprime-specialised}
\LB'_n \longrightarrow \mathrm{GL}_n(\bQ).
\end{equation}
\begin{lem}
The representation \eqref{eq:reduced-Burau-LBprime-specialised} of $\LB'_n$ is irreducible.
\end{lem}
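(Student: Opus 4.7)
The plan is to restrict the representation $V \cong \bQ^n$ to the subgroup $\mathfrak{S}_n \subset \LB'_n$ generated by $\tau_1, \ldots, \tau_{n-1}$, enumerate its $\mathfrak{S}_n$-invariant subspaces using Maschke's theorem, and then rule out all proper $\LB'_n$-invariant subspaces by applying a single generator $\rho_1$.

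Setting $t=1$ identifies the matrices of $\sigma_i$ and $\tau_i$, and turns $\delta = 1+t$ into $2$, so the specialized representation is generated by the images of $\tau_1, \ldots, \tau_{n-1}, \rho_1, \ldots, \rho_n$. I would first check that $U := \mathrm{span}_\bQ(x_1, \ldots, x_{n-1})$ is $\mathfrak{S}_n$-invariant and carries the standard $(n-1)$-dimensional representation of $\mathfrak{S}_n$, via the identification $x_j \leftrightarrow e_j - e_{j+1}$; this is a direct inspection of the columns of the $\tau_i$-matrices in Table~\ref{tab:reduced-Burau-LBprime}, including the boundary cases $i=1, n-1$. Moreover, the quotient $V/U$ is one-dimensional and trivial, since every $\tau_i$ with $i \leq n-2$ fixes $y$, while $\tau_{n-1}(y) = y - 2 x_{n-1} \equiv y \pmod{U}$.

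Since $\bQ[\mathfrak{S}_n]$ is semisimple, Maschke's theorem yields an $\mathfrak{S}_n$-stable decomposition $V = U \oplus W$ with $W$ a one-dimensional trivial summand, and since the standard and trivial representations of $\mathfrak{S}_n$ are distinct irreducibles (for $n \geq 2$), the list of $\mathfrak{S}_n$-invariant subspaces of $V$ is exactly $\{0\}, U, W, V$. Any nonzero $\LB'_n$-invariant subspace $V'$ is in particular $\mathfrak{S}_n$-invariant, so it suffices to show that neither $U$ nor $W$ is preserved by the action of $\rho_1$.

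That $\rho_1 U \not\subseteq U$ is immediate: the first column of the $\rho_1$-matrix in Table~\ref{tab:reduced-Burau-LBprime} shows that $\rho_1(x_1)$ has $y$-coefficient equal to $+1$, so it lies outside $U$. For $W$, I would explicitly compute a generating vector by writing $v = y + \sum_{j=1}^{n-1} c_j x_j$ and solving the $\mathfrak{S}_n$-invariance equations: these reduce to $c_2 = 2c_1$ (from $\tau_1$), the discrete Laplace recurrence $c_{j-1} + c_{j+1} = 2 c_j$ (from $\tau_i$, $2 \leq i \leq n-2$), and $c_{n-2} = 2 c_{n-1} + 2$ (from $\tau_{n-1}$), which uniquely pin down $v = y - \tfrac{2}{n} \sum_{j=1}^{n-1} j \, x_j$. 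Since $\rho_1(y) = y$ and $\rho_1(x_j) = x_j$ for $j \geq 2$, a short calculation gives $\rho_1(v) - v = \tfrac{4}{n}(x_1 + \cdots + x_{n-1}) - \tfrac{2}{n} y$, and comparing the $y$- and $x_1$-coefficients of $\rho_1(v)$ against those of any candidate scalar multiple $\lambda v$ forces simultaneously $\lambda = (n-2)/n$ and $2/n = -2(n-2)/n^2$, an inconsistency for $n \geq 2$. Hence $\rho_1 W \not\subseteq W$, so $V' = V$. The main technical step is this last computation; everything else is semisimplicity of $\bQ[\mathfrak{S}_n]$ together with direct inspection of the table.
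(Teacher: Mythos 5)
Your proof is correct, and it takes a genuinely different route from the one in the paper. The paper argues directly with an arbitrary nonzero vector $v$ of a putative subrepresentation $V$: a cascade of reductions (applying $\rho_n$, then $\tau_{n-1}$, then $\tau_{n-2},\ldots,\tau_1$ to $v$ and subtracting) eventually produces a nonzero element of $\mathrm{span}_\bQ\{x_1,\ldots,x_{n-1}\}$, after which irreducibility of the standard representation of $\mathfrak{S}_n$ and the identity $x_{n-1}-\rho_n(x_{n-1})=y$ finish the job. You instead invoke semisimplicity of $\bQ[\mathfrak{S}_n]$ at the outset: since $V|_{\mathfrak{S}_n}\cong \mathrm{std}\oplus\mathrm{triv}$ with the two summands non-isomorphic, the only $\mathfrak{S}_n$-invariant subspaces are $0$, $U$, $W$, $V$, and it remains to kill $U$ and $W$ with a single extra generator. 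Your computations check out: the invariance equations $c_2=2c_1$, $c_{j-1}+c_{j+1}=2c_j$, $c_{n-2}=2c_{n-1}+2$ do pin down $v=y-\tfrac{2}{n}\sum_j j\,x_j$ (consistent with the table, including the degenerate case $n=2$), the identity $\rho_1(v)-v=\tfrac{4}{n}\sum_j x_j-\tfrac{2}{n}y$ is correct, and comparing the $y$- and $x_1$-coefficients of $\rho_1(v)$ with $\lambda v$ forces $n=1$, a contradiction. The trade-off is clear: your argument is more conceptual and replaces the paper's somewhat fiddly Step 4 induction by a one-line enumeration of invariant subspaces, but it pays for this by having to compute the trivial isotypic vector explicitly (the $-\tfrac{2}{n}\sum j\,x_j$ correction), which the paper's vector-chasing avoids entirely; both arguments ultimately rest on the same two inputs, namely irreducibility of the standard representation of $\mathfrak{S}_n$ and the fact that some $\rho_i$ moves $x$-classes into the $y$-direction.
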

\begin{proof}
Suppose that $V \subseteq \bQ^n$ is a non-trivial subrepresentation; we will show that $V = \bQ^n$. Write $v = \alpha_1 x_1 + \cdots + \alpha_{n-1} x_{n-1} + \beta y$.

\emph{Step 1. It suffices to find $v \in V$ with $v \neq 0$ and $\beta = 0$.} \\
First note that $\mathrm{span}_\bQ \{x_1,\ldots,x_{n-1}\}$ is an irreducible subrepresentation (it is irreducible since its restriction to $\mathfrak{S}_n$ is the standard representation of $\mathfrak{S}_n$). Thus $V$ must contain $\mathrm{span}_\bQ \{x_1,\ldots,x_{n-1}\}$. But then it must also contain $x_{n-1} - \rho_n(x_{n-1}) = y$, and so $V = \bQ^n$.

\emph{Step 2. It suffices to find $v \in V$ with $v \neq 0$ and $\alpha_{n-2} - 2\alpha_{n-1} - 2\beta \neq 0$.} \\
For such a $v$, we have $\tau_{n-1}(v) - v = (\alpha_{n-2} - 2\alpha_{n-1} - 2\beta)x_{n-1}$, and we are done by Step 1.

\emph{Step 3. It suffices to find $v \in V$ with $v \neq 0$ and $\alpha_{n-1} + 2\beta \neq 0$.} \\
For such a $v$, we have $\rho_n(v) - v = -(\alpha_{n-1} + 2\beta)y$, and we are done by Step 2.

\emph{Step 4.} Let $v \in V$ be a non-zero vector. By the previous steps, we may assume that its coefficients satisfy $\alpha_{n-2} = \alpha_{n-1} = -2\beta$ and $\beta \neq 0$. We then have $\tau_{n-2}(v) - v = (\alpha_{n-3} + 2\beta)x_{n-2}$, so we are done by Step 1 unless $\alpha_{n-3} = -2\beta$. On the other hand, if $\alpha_{n-3} = -2\beta$, we have $\tau_{n-3}(v) - v = (\alpha_{n-4} + 2\beta)x_{n-3}$, so again we are done by Step 1 unless $\alpha_{n-4} = -2\beta$. Repeating this a further $n-5$ times, we see that we are done unless $\alpha_1 = \alpha_2 = \cdots = \alpha_{n-1} = -2\beta$. But in this case we have $\tau_1(v) - v = 2\beta x_1$, and we are done by Step 1.
\end{proof}

\begin{rmk}
The restriction of the representation \eqref{eq:reduced-Burau-LBprime-specialised} to $\mathfrak{S}_n \subset \LB'_n$ is isomorphic to the regular representation of $\mathfrak{S}_n$. To see this, note that $\mathrm{span}_\bQ \{x_1,\ldots,x_{n-1}\}$ is a subrepresentation isomorphic to the standard representation of $\mathfrak{S}_n$, and the quotient is a trivial $1$-dimensional representation. Thus, by Maschke's theorem, $\eqref{eq:reduced-Burau-LBprime-specialised}|_{\mathfrak{S}_n}$ is isomorphic to the sum of the standard representation and a trivial $1$-dimensional representation, which is isomorphic to the regular representation of $\mathfrak{S}_n$.
\end{rmk}

\paragraph{Kernel.}

The classical (unreduced) Burau representation $\B_n \to \mathrm{GL}_n(\bZ[t^{\pm 1}])$ is known to be faithful for $n \leq 3$ and unfaithful for $n\geq 5$ \cite{Moody1991, LongPaton1993, Bigelow1999}. By contrast, the unreduced Burau representation $\eqref{eq:unreduced-Burau-LB} \colon \LB_n \to \mathrm{GL}_n(\bZ[t^{\pm 1}])$ is unfaithful for all $n \geq 2$, by \cite[Lemmas~4~and~5]{Bardakov}. The unreduced Burau representations for $\LB_n$ and $\LB'_n$ fit together in the commutative square
\begin{equation}
\label{eq:square-of-reduced-Burau-reps}
\begin{tikzcd}
\LB_n \ar[rr,"{\eqref{eq:unreduced-Burau-LB}}"] \ar[d,"\mathrm{incl.}",swap] && \mathrm{GL}_n(R) \ar[d,"{- \otimes_{R} S}"] \\
\LB'_n \ar[rr,"{\eqref{eq:unreduced-Burau-LBprime}}",swap] && \mathrm{GL}_n(S),
\end{tikzcd}
\end{equation}
(recall that $R = \bZ[t^{\pm 1}]$ and $S = R/(t^2 - 1)$) so the unreduced Burau representation $\eqref{eq:unreduced-Burau-LBprime} \colon \LB'_n \to \mathrm{GL}_n(S)$ is also unfaithful for all $n \geq 2$. In fact, the kernel of the composition $\LB_n \to \mathrm{GL}_n(S)$ across the diagonal of \eqref{eq:square-of-reduced-Burau-reps} is larger than the kernel of $\eqref{eq:unreduced-Burau-LB} \colon \LB_n \to \mathrm{GL}_n(R)$ since, for example, $(\tau_1 \sigma_1)^2$ is sent to $\bigl[ \begin{smallmatrix} t^2 & 0 \\ 0 & 1 \end{smallmatrix} \bigr] \oplus \mathrm{I}_{n-2}$.

\paragraph{The transpose of the Burau representation.}

As mentioned in the introduction, the (unreduced) Burau representation of the classical braid group $\B_n$ is equivalent to its transpose. Explicitly, conjugation by the diagonal matrix $\mathrm{Diag}(1,t,\ldots,t^{n-1})$ passes between the Burau representation and its transpose. On the other hand:

\begin{lem}
The unreduced Burau representation \eqref{eq:unreduced-Burau-LB} of the loop braid group $\LB_n$ is not equivalent to its transpose.
\end{lem}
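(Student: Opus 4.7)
The plan is to argue by contradiction. Assume some $P \in \mathrm{GL}_n(R)$ satisfies $P \cdot \eqref{eq:unreduced-Burau-LB}(g) \cdot P^{-1} = \eqref{eq:unreduced-Burau-LB}(g)^T$ for every $g \in \LB_n$, where we may assume $n \geq 2$ (the case $n = 1$ being vacuous). The first step will use the $\tau_i$ generators to pin down the form of $P$; the second will use $\sigma_1$ to reach a contradiction.

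Since each $\eqref{eq:unreduced-Burau-LB}(\tau_i)$ is the permutation matrix of a transposition, hence symmetric, the intertwining condition for $\tau_i$ reduces to $P \tau_i = \tau_i P$. Thus $P$ centralises every transposition matrix, and therefore the entire permutation action of $\mathfrak{S}_n$ on $R^{\oplus n}$. Using the identity $(\sigma P \sigma^{-1})_{ij} = P_{\sigma^{-1}(i), \sigma^{-1}(j)}$ together with the transitivity of $\mathfrak{S}_n$ on the diagonal positions $\{(i,i)\}$ and on the off-diagonal positions $\{(i,j) : i \neq j\}$ for $n \geq 2$, one concludes that $P$ has constant diagonal and constant off-diagonal entries, i.e.\ $P = a I_n + b J_n$ for some $a, b \in R$, where $J_n$ denotes the $n \times n$ all-ones matrix.

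Now impose the intertwining condition for $\sigma_1$. A direct computation using the explicit block form of $\sigma_1$ from \eqref{eq:unreduced-Burau-LB} and $P = a I_n + b J_n$ yields $(P\sigma_1)_{12} = a + b$ on the one hand and $(\sigma_1^T P)_{12} = (1-t)b + t(a+b) = b + at$ on the other, so that $a(1-t) = 0$. Since $R = \bZ[t^{\pm 1}]$ is an integral domain and $1 - t \neq 0$, this forces $a = 0$. But then $P = b J_n$ has rank at most one, so $\det P = 0$ for $n \geq 2$, contradicting $P \in \mathrm{GL}_n(R)$. The only step with any real content is the rigidity statement above --- identifying the commutant of the permutation representation of $\mathfrak{S}_n$ inside $M_n(R)$ --- after which the two-entry calculation and the rank obstruction are immediate.
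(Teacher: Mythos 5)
Your proof is correct, but it takes a different route from the paper's. The paper fixes the convention $\mathrm{Bur}(g)M = M\,\mathrm{Bur}(g)^{\mathrm{t}}$ and proceeds by induction on $n$: it first shows for $n=2$ that any intertwiner has the singular form $\bigl[\begin{smallmatrix} a & -a \\ -a & a\end{smallmatrix}\bigr]$, then uses the $2\times 2$ (resp.\ $(n-1)\times(n-1)$) blocks to run a base case at $n=3$ and an inductive step showing $M=0$ for all $n\geq 3$. You instead exploit the fact that the $\tau_i$ go to symmetric permutation matrices, so the intertwining condition for them is literally a commutation condition; identifying the commutant of the permutation representation of $\mathfrak{S}_n$ over $R$ as $\{aI_n+bJ_n\}$ (which is standard, and your transitivity argument for it is fine) collapses everything to a single entry comparison in the $\sigma_1$ relation, giving $a(1-t)=0$, hence $a=0$ since $R=\bZ[t^{\pm 1}]$ is a domain, and $bJ_n$ is singular. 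Your argument is uniform in $n\geq 2$ with no induction, and it transparently survives base change to any integral domain containing $R$ (e.g.\ $\bQ(t)$), so it shows the stronger statement that the two representations remain inequivalent over the fraction field; the paper's approach, by contrast, yields the sharper structural information that (in its convention) the \emph{entire} space of intertwiners vanishes for $n\geq 3$. One small remark worth adding for completeness: in your convention $P\,\mathrm{Bur}(g)=\mathrm{Bur}(g)^{\mathrm{t}}P$, the matrix $bJ_n$ genuinely \emph{is} an intertwiner (each $\mathrm{Bur}(\sigma_i)$ and $\mathrm{Bur}(\tau_i)$ has all column sums equal to $1$), so your computation of the intertwiner space is sharp and the only obstruction to equivalence is invertibility, exactly as you conclude.
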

\begin{proof}
Let $\eqref{eq:unreduced-Burau-LB} = \mathrm{Bur}$. The aim is to show that there is no invertible matrix $M \in \mathrm{GL}_n(\bZ[t^{\pm 1}])$ such that $\mathrm{Bur}(g) M = M \mathrm{Bur}(g)^{\mathrm{t}}$ for all $g \in \LB_n$, where $(-)^{\mathrm{t}}$ denotes the transpose of a matrix. For $n=2$, the equations $\mathrm{Bur}(\sigma_1) M = M \mathrm{Bur}(\sigma_1)^{\mathrm{t}}$ and $\mathrm{Bur}(\tau_1) M = M \mathrm{Bur}(\tau_1)^{\mathrm{t}}$ imply that $M$ is of the form $\bigl[ \begin{smallmatrix} a & -a \\ -a & a \end{smallmatrix} \bigr]$, which is not invertible. For $n\geq 3$, we will prove by induction on $n$ the statement that the only matrix $M$ satisfying $\mathrm{Bur}(g) M = M \mathrm{Bur}(g)^{\mathrm{t}}$ for all $g \in \LB_n$ is the zero matrix. We begin with the base case $n=3$. Applying the argument for $n=2$ to the top-left and bottom-right $2 \times 2$ blocks of $M$, we deduce that $M$ must be of the form $\Bigl[ \begin{smallmatrix} a & -a & b \\ -a & a & -a \\ c & -a & a \end{smallmatrix} \Bigr]$. Given this, the equation $\mathrm{Bur}(\tau_1) M = M \mathrm{Bur}(\tau_1)^{\mathrm{t}}$ then implies that $b=c=-a$, and the equation $\mathrm{Bur}(\sigma_1) M = M \mathrm{Bur}(\sigma_1)^{\mathrm{t}}$ implies that $a = at$, thus $a=0$ and $M$ is the zero matrix. For the inductive step, we may apply the inductive hypothesis to the top-left and bottom right $(n-1) \times (n-1)$ blocks of $M$ to see that the entries of $M$ are all zero except possibly the top-right and bottom-left entries. But then the equation $\mathrm{Bur}(\tau_1) M = M \mathrm{Bur}(\tau_1)^{\mathrm{t}}$ implies that these are zero too.
\end{proof}

Thus the representations \eqref{eq:unreduced-Burau-LB} and $\eqref{eq:unreduced-Burau-LB}^{\mathrm{tr}}$ are non-equivalent representations of $\LB_n$. However, we have constructed both of these topologically: \eqref{eq:unreduced-Burau-LB} is the action of $\LB_n$ on the first homology group $H_1(\widetilde{\bD}_n^3 , \eta^{-1}(*) ; \bZ)$ and $\eqref{eq:unreduced-Burau-LB}^{\mathrm{tr}}$ is the action of $\LB_n$ on the second homology group $H_2(\widetilde{\bD}_n^3 ; \bZ)$, as shown at the end of \S\ref{s-non-extended}.

\phantomsection
\addcontentsline{toc}{section}{References}
\renewcommand{\bibfont}{\normalfont\small}
\setlength{\bibitemsep}{0pt}
\printbibliography

\noindent {Martin Palmer, \itshape Institutul de Matematic\u{a} Simion Stoilow al Academiei Rom{\^a}ne, 21 Calea Griviței, 010702 București, Rom{\^a}nia}. \noindent {Email address: \tt mpanghel@imar.ro}

\noindent {Arthur Soulié, \itshape University of Glasgow, School of Mathematics and Statistics, 132 University Pl, Glasgow G12 8TA, United Kingdom}. \noindent {Email address: \tt artsou@hotmail.fr}
\end{document}